\documentclass[a4paper, 11pt]{amsart}

\usepackage{comment}
\usepackage[T1]{fontenc}
\usepackage[utf8]{inputenc}
\usepackage{geometry}
\usepackage{amssymb}
\usepackage{hyperref}

\makeatletter
\@namedef{subjclassname@2020}{%
  \textup{2020} Mathematics Subject Classification}
\makeatother

\allowdisplaybreaks[2]

\newtheorem{theorem}{Theorem}[section]

\newtheorem{lemma}[theorem]{Lemma}
\newtheorem{corollary}[theorem]{Corollary}
\theoremstyle{definition}

\newtheorem{example}[theorem]{Example}

\numberwithin{equation}{section}

\newcommand\her{\mathrm{her}}
\newcommand\Crm{\mathrm{C}}

\newcommand\RR{\mathbb{R}}
\newcommand\NN{\mathbb{N}}
\newcommand\CC{\mathbb{C}}
\newcommand\ZZ{\mathbb{Z}}

\begin{document}

\title[Products of commutators in simple algebras]{Products of commutators in simple algebras}

\author{Matej Brešar} 
\author{Hau-Yuan Jang}
\author{Leonel Robert}
\address{Faculty of Mathematics and Physics, University of Ljubljana \&
Faculty of Natural Sciences and Mathematics, University of Maribor \& IMFM, Ljubljana, Slovenia}
\email{matej.bresar@fmf.uni-lj.si}
\address{Department of Mathematics, National Cheng Kung University, Tainan, Taiwan}
\email{l18121022@gs.ncku.edu.tw}
\address{Department of Mathematics, University of Louisiana at Lafayette, Lafayette, LA 70503, USA}
\email{lrobert@louisiana.edu}

\thanks{The first author was partially supported by the ARIS Grants P1-0288 and J1-60025.
}

\subjclass[2020]{12E15, 16K20, 46L05}

\keywords{Commutator, central simple algebra, $C^*$-algebra.}

\begin{abstract}
    Let $A$ be a finite-dimensional simple algebra that is not a field. We show that every $a\in A$ can be written as 
    $a=(bc-cb)(de-ed)$ for some
    $b,c,d,e\in A$. This is not always true for infinite-dimensional simple algebras. In fact, for any $m\in \NN$ we provide an example of an infinite-dimensional simple unital $C^*$-algebra $A$ in which $1$ cannot be written as $\sum_{i=1}^m x_i(a_ib_i-b_ia_i)y_i$ for some $x_i,a_i,b_i,y_i\in A$. 
\end{abstract}

\maketitle
\section{Introduction}
In the recent paper \cite{GT}, Gardella and Thiel showed that a unital ring $A$ is equal to its ideal generated by all (additive) commutators $[x,y]=xy-yx$ if and only if there exists an $N\in \NN$  such that every $a \in A$
can be written as 
\[
a=\sum_{j=1}^N [b_j,c_j][d_j,e_j]
\] 
for some  $b_j , c_j , d_j , e_j\in A$.
The obvious problem that arises is to find the smallest such
$N$. It is easy to see that 
$N\le 2$ if $A$ is a noncommutative division ring \cite[Proposition 5.8]{GT}, but it is  far less obvious whether $N$ is actually equal to $1$ \cite[Question 5.9]{GT}. This was shown to be true in the special case where $A$ is a  skew Laurent series division ring  
 \cite{JK}.

 In Section \ref{s2}, we show that
$N=1$ if $A$ is a finite-dimensional simple algebra that is not a field (Theorem \ref{t1}). 
In particular, this gives a positive answer to the aforementioned question by Gardella and Thiel for finite-dimensional division algebras. In fact, the division algebra case is the main novelty  since     the case of   
finite-dimensional simple algebras different from division algebras was already handled \cite[Corollary 4.5]{BGT}. However, we will also give a different, short proof  of this latter result.

We remark that Theorem \ref{t1}  can also be viewed as a contribution to the general problem of whether the image of a multilinear polynomial on an algebra is a vector space (see \cite{KBMRY}).

In Section \ref{s3}, we complement the results of Section \ref{s2} by showing that $N$ is not uniformly bounded among all simple, unital, infinite-dimensional C*-algebras. In fact, a stronger failure occurs. It is clear that
a unital ring $A$ is equal to the ideal generated by its commutators if and only if there exists an $m\in \NN$  such \[
1=\sum_{i=1}^m x_i[a_i,b_i]y_i
\]   
for some $x_i,y_i,a_i,b_i\in A$ (and in this case $m\leq N$).
In Section \ref{s3} we show
that for each $m\in \NN$ there exists a simple, unital, infinite-dimensional C*-algebra $A$ in which  the above equation
cannot be satisfied with that $m$ (Theorem  \ref{thm:simplenoCm}).
To prove this  we adapt the proof of \cite[Theorem 1.4]{robert2015}, which in turn relies on  Villadsen's technique  from \cite{villadsen}.  As a corollary, we deduce that for any noncommutative polynomial $f$ (with coefficients in $\CC$) that is an identity for $\CC$, there exists a simple, unital, noncommutative C*-algebra  whose image under $f$ does not contain the unity (Corollary \ref{cor:fimage}).

\section{The finite-dimensional case}\label{s2}

We start with an elementary lemma,
which will be needed for obtaining a new proof of \cite[Corollary 4.5]{BGT}. This lemma will actually reduce the problem to the situation where we can apply  
the following deeper result by Amitsur and Rowen:  If $A$ is a (finite-dimensional) central simple algebra that is not a division algebra, then every noncentral element in $A$ whose reduced 
trace is $0$ is a commutator \cite[Theorem 2.4]{AR}. (To the best of our knowledge, it is still unknown whether this is true for 
arbitrary elements of reduced trace 0 in arbitrary central simple algebras.)

\begin{lemma}\label{thel}
Let $F$ be a field, 
    let $D$ be a central division $F$-algebra of degree $n > 1$,
    let $m\in \NN$, and let $A= M_m(D)$.
If $H$ is a hyperplane of $A$,
then every element $a\in A$ can be written as $a=h_1h_2$ for some
    $h_1,h_2\in H$.  
    Moreover, identifying $A$ with 
$M_m(F)\otimes_F D$, we have that
given any invertible matrix
    $t\in M_m(F)$ we can choose
    $h_1,h_2$ to be of the form
    $h_1=a(t^{-1}\otimes d^{-1})$
    and $h_2= t\otimes d$ for some 
    $d\in D\setminus{\{0\}}$.
\end{lemma}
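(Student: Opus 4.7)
The plan is to begin by observing that the product $h_1 h_2 = a$ holds automatically from the prescribed form: indeed, $(t^{-1}\otimes d^{-1})(t\otimes d) = 1\otimes 1 = 1$ in $A$. So the entire content of the claim reduces to exhibiting a nonzero $d\in D$ making both $h_1 = a(t^{-1}\otimes d^{-1})$ and $h_2 = t\otimes d$ lie in $H$.

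I will write $H = \Ker\phi$ for a nonzero $F$-linear functional $\phi\colon A\to F$, and introduce the two $F$-linear functionals on $D$ given by
\[
\phi_1(x) := \phi(t\otimes x), \qquad \phi_2(y) := \phi(a(t^{-1}\otimes y)).
\]
The membership conditions then become $\phi_1(d) = 0$ and $\phi_2(d^{-1}) = 0$. Note that $F$ must be infinite: by Wedderburn's little theorem a finite noncommutative division ring cannot exist, while $D$ has degree $n > 1$. If either $\phi_1$ or $\phi_2$ vanishes identically, the remaining condition is a single nontrivial linear equation on $D$, which admits nonzero solutions since $\dim_F D = n^2 \geq 4$.

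In the main case, with both $\phi_1$ and $\phi_2$ nonzero, I will reformulate the second condition polynomially. The Cayley--Hamilton theorem for $D$ gives
\[
d\cdot q(d) = (-1)^{n-1}\mathrm{Nrd}(d), \qquad q(d) := d^{n-1} - c_1(d) d^{n-2} + \cdots + (-1)^{n-1} c_{n-1}(d),
\]
where $q$ is a homogeneous polynomial map $D\to D$ of degree $n-1$ (an ``adjugate polynomial''). Since $\mathrm{Nrd}(d) \neq 0$ for $d\neq 0$, the condition $\phi_2(d^{-1}) = 0$ is equivalent to $P(d) := \phi_2(q(d)) = 0$, where $P$ is a homogeneous polynomial on $D$ of degree $n-1$. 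The problem thus becomes: find a nonzero $d\in D$ lying on both the hyperplane $\{\phi_1 = 0\}$ and the hypersurface $\{P = 0\}$. Projectively, this is a complete intersection in $\mathbb{P}(D) \cong \mathbb{P}_F^{n^2-1}$ of a linear form and a form of degree $n-1$, of expected dimension at least $n^2 - 3 \geq 1$.

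Over $\ov{F}$ the intersection is nonempty by dimension reasons, but the main obstacle is to produce an $F$-rational point. I anticipate this will require exploiting the specific structure of $P$ (which arises from the reduced trace pairing applied to the adjugate polynomial of $D$, not an arbitrary degree-$(n-1)$ form), or reducing the search to within a maximal subfield $L\subset D$ of degree $n$ where the conditions parametrise a one-variable rational problem more amenable to an elementary resolution. This rationality step is the crux of the proof.
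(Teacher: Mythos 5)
Your reduction is correct as far as it goes: the identity $h_1h_2=a$ is automatic, and the lemma is equivalent to finding a nonzero $d\in D$ with $\phi_1(d)=0$ and $\phi_2(d^{-1})=0$, where $\phi_1,\phi_2$ are the two linear functionals you define. The Cayley--Hamilton reformulation $\phi_2(d^{-1})=0\iff P(d)=0$ with $P$ homogeneous of degree $n-1$ is also fine. But the proposal then stops at exactly the point where the lemma's content lies: you acknowledge that producing an $F$-rational point of $\{\phi_1=0\}\cap\{P=0\}$ is ``the crux'' and only speculate about how it might be done. As stated, this step cannot be completed by general position or dimension arguments -- over an arbitrary field, an intersection of a hyperplane with a hypersurface of degree $\ge 2$ in projective space need not have any $F$-rational points (already a smooth conic over $\QQ$ can fail to have them), so one \emph{must} use the special structure of the situation. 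The proposal is therefore incomplete.

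The way the paper closes this gap is worth recording, since it sidesteps the polynomial condition entirely by linearizing both constraints. Fix a maximal subfield $K\subseteq D$, so $\dim_F K=n$. The map $U\colon D\to K^*$ given by $U(d)(k)=\phi_1(kd)$ is $F$-linear from an $n^2$-dimensional space to an $n$-dimensional one, hence has a nonzero kernel element $d_0$; this means the entire $n$-dimensional coset $Kd_0$ lies in $\Ker\phi_1$. Now restrict the search for $d$ to $Kd_0$: the inverses of its nonzero elements sweep out $d_0^{-1}K$ (since $(kd_0)^{-1}=d_0^{-1}k^{-1}$ and $K$ is a field), which is again an $n$-dimensional subspace, and on it the second condition $\phi_2=0$ is \emph{linear}. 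As $n\ge 2$, it has a nonzero solution $d_0^{-1}k_0$, and $d=k_0^{-1}d_0$ satisfies both conditions. This is precisely the ``reduction to a maximal subfield'' you gesture at in your last sentence, but the essential extra idea -- choosing $d_0$ so that the first condition holds on all of $Kd_0$ at once, so that only one linear condition remains on an $n$-dimensional space -- is missing from the proposal, and without it the argument does not go through.
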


\begin{proof}  Let 
$K$ be any maximal subfield of $D$, let
$\tau$ be any linear functional on $A$ such that
$H=\ker\tau$, and let  $t\in M_m(F)$ be
 any invertible matrix. 

 We claim that there
exists a $d_0\in D\setminus\{0\}$ such that $t\otimes kd_0\in H$
for every $k\in K$. To prove this, we use the idea from \cite{H}   and define an $F$-linear map
$U$ from $D$ to the dual $K^{*}$  of $K$
by $U(d)(k)=\tau(t\otimes kd)$ for all $d\in D$, $k\in K$. 
As $\dim_F D=n^2$ and 
$\dim_F K^*=n$, 
it follows that 
$U$ has  nontrivial kernel. Any  $d_0\in \ker U\setminus\{0\}$ satisfies our claim.

We may assume that $a\ne 0$. Then
 $a(t^{-1}\otimes d_0^{-1}K)$
is  an $n$-dimensional space over $F$, so its intersection with $H$ is nontrivial.
Choose  $k_0\in K\setminus\{0\}$ such that 
$h_1=a(t^{-1}\otimes d_0^{-1}k_0)\in H$.  The element $h_2=t\otimes k_0^{-1}d_0$ 
lies in $H$  by what we proved in the preceding paragraph. Clearly,
 $a=h_1h_2$ and $h_1,h_2$ are of the desired form.
\end{proof}

The case where $n=1$, i.e., the case where $A=M_m(F)$, is more difficult. It turns out that the conclusion that   $a=h_1h_2$ for some
    $h_1,h_2\in H$ holds for $m\ge 3$, but not always for $m=2$; see \cite[Proposition 12 and Theorem 14]{de}.

We  now state and prove our main result of this section.

\begin{theorem}\label{t1}
Every element in a finite-dimensional simple algebra $A$
that is not a field is  a product of two commutators.
\end{theorem}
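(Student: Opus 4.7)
The plan is to invoke Wedderburn--Artin to write $A \cong M_m(D)$ for a finite-dimensional division $F$-algebra $D$ of degree $n$, with $A$ not being a field forcing $m \geq 2$ or $n \geq 2$. Both resulting subcases would start from Lemma \ref{thel}, which writes $a = h_1 h_2$ with $h_1, h_2$ in a prescribed hyperplane; the two subcases then diverge in how these factors are converted into actual commutators.

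In the non-division-algebra case ($m \geq 2$ with $n \geq 2$), I would take $H = \ker \tau$ where $\tau$ is the reduced trace on $A$, and apply Lemma \ref{thel} with $t \in M_m(F)$ chosen to be a non-scalar invertible matrix, which is available because $m \geq 2$. The resulting $h_2 = t \otimes d$ is non-central because $t$ is non-scalar, and for nonzero $a$ the element $h_1 \in \ker \tau$ is non-central because, provided $\tau(1) \neq 0$, the hyperplane $\ker \tau$ contains no nonzero scalars. The Amitsur--Rowen theorem quoted just before Lemma \ref{thel} then exhibits each $h_i$ as a single commutator, making $a$ a product of two. The special case $n = 1$, i.e.\ $A = M_m(F)$, is handled separately: for $m \geq 3$ by combining the factorization result cited from \cite{de} in the remark following Lemma \ref{thel} with Amitsur--Rowen, and for $m = 2$ by falling back to \cite[Corollary 4.5]{BGT}.

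The division-algebra case ($m = 1$ and $n \geq 2$) is the main obstacle, since Amitsur--Rowen is not available for division algebras; indeed, whether every reduced-trace-zero element of a division algebra is a commutator is precisely the open question referred to in the excerpt. Lemma \ref{thel} still yields $a = h_1 h_2$ with $h_1 = a d^{-1}$ and $h_2 = d$ for some nonzero $d$ in any chosen hyperplane. My plan is to exploit the freedom in both the hyperplane $H$ and the element $d$ to arrange that both $d$ and $a d^{-1}$ are honest commutators. Concretely, I would fix a maximal subfield $K \subset D$ and an element $u \in D$ whose conjugation restricts to a nontrivial automorphism $\sigma$ of $K/F$, so that $[u, k] = u(k - \sigma^{-1}(k))$ produces an explicit family of commutators parametrized by $k \in K$; one then tries to match $d$ and $a d^{-1}$ simultaneously to members of such families, possibly for different choices of $K$, $u$, $\sigma$. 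The hard step is to show that this family, or a union over several maximal subfields, is rich enough to accommodate every $a$; if this direct approach stalls, an alternative is to strengthen Lemma \ref{thel} so that the factors $h_i$ land in a prescribed affine variety of commutators rather than merely in a hyperplane.
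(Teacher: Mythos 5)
Your treatment of the case $A\cong M_m(D)$ with $m\geq 2$ and $\deg D>1$ matches the paper's: reduced-trace hyperplane, Lemma \ref{thel}, Amitsur--Rowen. One small repair is needed there: your argument that $h_1$ is noncentral rests on $\tau(1)\neq 0$, which fails when the characteristic of $F$ divides $mn$ (the reduced trace of $1$ is $mn\cdot 1_F$). The paper sidesteps this by instead choosing the nonscalar invertible $t$ so that $a\notin t\otimes D$; then $h_1=a(t^{-1}\otimes d^{-1})$ cannot be a scalar, since that would force $a\in t\otimes D$. The case $A=M_m(F)$ is delegated to the literature in both your write-up and the paper, so that part is fine.

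The genuine gap is the division-algebra case, which is precisely the new content of the theorem, and your proposal stops at a plan there: you say you would try to match $d$ and $ad^{-1}$ simultaneously to families of commutators $[u,k]$ and concede that ``the hard step'' of showing such families are rich enough is unresolved. That step is not routine, and the paper's actual argument is structured differently. It first disposes of the case where a maximal subfield $L\ni d$ contains an element $\ell$ inseparable over $F$, using Amitsur--Rowen's Theorem 0.7 to get $1=[\ell,a]$ and hence $d=[\ell,da][\ell,a]$. In the separable case it writes $L=F(u)$, picks $v$ with $[u,v]\neq 0$, and forms the $(n+1)$-dimensional space $W=[u,v]^{-1}(L+Fv)$, verifying by a direct computation that the inverse of \emph{every} nonzero element of $W$ is a commutator. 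Since $L=\ker(\mathrm{ad}_u)$ has dimension $n$, the image of $\mathrm{ad}_u$ has dimension $n^2-n$, so $\mathrm{ad}_u(D)\cap dW\neq\{0\}$ by a dimension count; writing $[u,b]=dw$ gives $d=[u,b]w^{-1}$, a product of two commutators. The key idea you are missing is thus not to force both factors into one parametrized family, but to build a linear space of elements whose inverses are commutators that is large enough to meet $d^{-1}\cdot\mathrm{ad}_u(D)$. Without something of this kind, your division-algebra case does not close.
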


\begin{proof}  The center 
of $A$ is a field that contains scalar multiples of unity. We may therefore consider $A$ as a
central simple algebra, i.e., as a (finite-dimensional) 
algebra over its center $F$. 
By  Wedderburn's theorem, 
$A\cong M_m(D)$ for some $m\ge 1$ and a central  division $F$-algebra $D$.
We divide the proof into two cases, when $m=1$ and when $m> 1$.

\smallskip 
\underline{First case: $m=1$.} We are assuming that $A=D$ is a central division algebra.
Take $d\in D$ and let us show that it is a product of two commutators.
We may assume that $d\ne 0$.
 Let $L$ be a maximal subfield of $D$ containing $d$.

Assume first that there exists an $\ell\in L$ that is not separable over $F$. Then we may  apply \cite[Theorem 0.7]{AR}, which tells us that $1 =[\ell,a] $ for some $a\in D$. Since $d$ and $\ell$ lie in $L$ and therefore commute,  it follows that $d=d[\ell,a][\ell,a]= [\ell,da][\ell,a]$.

We may thus assume that   $L$ is a separable extension of $F$. By the primitive element theorem,  $L=F(u)$
for some $u\in L$. 
Fix $v\in D$ such that $[u,v]\ne 0$ and set
 $$W=[u,v]^{-1}(L + Fv).$$
  We claim that the inverse of every nonzero element  $w\in W$
 is a commutator. Write $$w=[u,v]^{-1}(\ell + \lambda v),$$ where $\ell\in L$ and $\lambda \in F$.
If $\lambda=0$
then $$w^{-1}= \ell^{-1}[u,v]= [u,\ell^{-1}v],$$ and if  $\lambda\ne 0$ then $$w^{-1}= (\ell + \lambda v)^{-1}[u,v] =\lambda^{-1}(\ell + \lambda v)^{-1}[u,\ell + \lambda v]= [\lambda^{-1}(\ell + \lambda v)^{-1}u,\ell + \lambda v]. $$
This proves our claim.

As usual, we denote by ad$_u$ the inner derivation given by
ad$_u(y) =[u,y]$, $y\in D$. Since 
$L=F(u)$ is a maximal subfield, it is equal to the kernel of ad$_u$. Write $n$ for the degree of $D$. As $L=\ker {\rm ad}_u$ has dimension $n$, it follows that ad$_u(D)$, the image of ad$_u$, has dimension $n^2-n$.
Note that the space $W$ has dimension $n+1$. The same is then true for the space $dW$. Consequently, ad$_u(D)\cap dW\ne\{0\}$. Therefore, there exist a
  $b\in D$
 and a $w\in W\setminus{\{0\}}$ such that $[u,b]= dw$, i.e., $d=[u,b]w^{-1}$.
 Since $w^{-1}$ is a commutator, this completes the proof for this case.

\smallskip 
\underline{Second case: $m>1$.} 
 The case where $D=F$ is of different nature and was treated many years ago \cite[Theorem 4.1]{Botha} (and is also covered by  results of \cite{de}). We therefore assume that the degree $n$  of $D$ is greater than $1$. 

Take a nonzero $a\in A$. By \cite[Theorem 2.4]{AR}, it is enough to show that there exist
 elements $h_1,h_2\in A$ that are noncentral, have reduced trace $0$, and satisfy   $a=h_1h_2$. Since the set of  elements having reduced trace  $0$ is a hyperplane, this follows from 
 Lemma \ref{thel}. Indeed, we just choose 
  an invertible, nonscalar matrix $t\in M_m(F)$ such that 
$a\notin t\otimes D$ to guarantee that $h_1=a(t^{-1}\otimes d^{-1})$
    and $h_2= t\otimes d$  are noncentral.
\end{proof}

\section{An infinite-dimensional counterexample}\label{s3}
Recall that a C*-algebra is a Banach *-algebra satisfying the C*-identity $\|x^*x\|=\|x\|^2$ for all elements $x$.

For a unital C*-algebra $A$ and $m\in \NN$, consider the following property:
\begin{itemize}
	\item[$\Crm_m$]: $1=\sum_{i=1}^m x_i[a_i,b_i]y_i$ for some $x_i,y_i,a_i,b_i\in A$.
\end{itemize}	
Every simple, unital, noncommutative  C*-algebra has $\Crm_m$ for some $m$. In Theorem \ref{thm:simplenoCm}
we show that there is no uniform bound on  $m$ across all such C*-algebras. To outline the argument, fix $m$. First, we construct homogeneous C*-algebras  that fail $\Crm_m$ (Theorem \ref{thm:noCm}). These C*-algebras have the form $pM_n(C(X))p$ for a suitable topological space $X$ and projection $p\in M_n(C(X))$.
Next, we construct a sequence of such homogeneous C*-algebras $A_1,A_2,\ldots$, together with embeddings $\phi_k\colon A_k\to A_{k+1}$, such that the inductive limit C*-algebra $A=\varinjlim A_k$ is simple and still fails $\Crm_{m}$ (Theorem \ref{thm:simplenoCm}).


Let $X$ be a compact Hausdorff space. Let $C(X)$ denote the C*-algebra of continuous $\CC$-valued functions on $X$. It will be helpful to bear in mind the correspondences among projections in $\bigcup_{n=1}^\infty M_n(C(X))$, finitely generated projective $C(X)$-modules, and complex vector bundles over $X$. More concretely, given a projection $p\in M_n(C(X))$, we obtain a vector bundle $\xi_p=(X,E,\pi)$, where 
\[
E=\{(x,v)\in X\times \CC^n:p(x)v=v\},
\] 
and $\pi\colon E\to X$ is the projection onto the first coordinate. The set
\[
P_p=\{s\in C(X,\CC^n) : p(x)s(x)=s(x)\hbox{ for all }x\in X\}
\] 
of continuous sections
of $\xi_p$ is a finitely generated projective module over  $C(X)$ (with pointwise defined module operations).

The Murray-von Neumann subequivalence  of projections $p,q\in \bigcup_{n=1}^\infty M_n(C(X))$ is defined as follows: $p\precsim q$ if $p=v^*v$ and $vv^*\leq q$ for some $v\in \bigcup_{n=1}^\infty M_n(C(X))$. We have $p\precsim q$ if and only if $P_p$ embeds in $P_q$ as a $C(X)$-submodule.

Given $p\in M_m(C(X))$ and $q\in M_n(C(X))$, let us denote by  $p\oplus q\in M_{m+n}(C(X))$ the projection 
\[
\begin{pmatrix}
	p&0\\
	0&q	
\end{pmatrix}.	
\]
Notice that $\xi_{p\oplus q}\cong \xi_{p}\oplus \xi_q$ and $P_{p\oplus q}\cong P_p\oplus P_q$.

Denote by $e_{11}\in M_\infty(C(X))$ the projection whose $(1,1)$-entry is equal to 1 and all other entries are zero.
\begin{lemma}
	Let $p\in M_n(C(X))$ be a projection. Then $e_{11} \precsim  p$ if and only if  $\xi_p$ admits a nowhere-zero continuous section, i.e., there exists 	$s\in P_p$ with $s(x)\neq 0$ for all $x$.	
\end{lemma}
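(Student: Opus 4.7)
The plan is to pass between a partial isometry witnessing the subequivalence and an explicit continuous section of $\xi_p$, in both directions.

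\emph{Forward direction.} Assume $e_{11}\precsim p$ and choose, according to the definition, an element $v\in M_k(C(X))$ (for some $k$ large enough, with $p$ padded by zeros to size $k$) satisfying $v^*v=e_{11}$ and $vv^*\leq p$. A standard positivity argument (applied to $0\leq vv^*\leq p$ with $p$ a projection) gives $pvv^*=vv^*p=vv^*$, from which
\[
(pv-v)(pv-v)^* = pvv^*p - pvv^* - vv^*p + vv^* = 0,
\]
so $pv=v$. Let $s\in C(X,\CC^k)$ be the first column of $v$. Then $p(x)s(x)=s(x)$ for every $x$, so $s$ is a continuous section of the padded $\xi_p$; its last $k-n$ coordinates vanish, so it naturally lives in $C(X,\CC^n)$. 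Finally, $\|s(x)\|^2=(v^*v)_{11}(x)=1$ everywhere, so $s$ is nowhere zero.

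\emph{Converse.} Suppose $s\in P_p$ is nowhere zero. Then $x\mapsto\|s(x)\|$ is a strictly positive continuous function, so $t:=s/\|s\|$ is a unit-norm continuous section in $P_p$. Define $v\in M_n(C(X))$ to have first column equal to $t$ and all other columns zero. A direct computation gives $v^*v=e_{11}$, while $(vv^*)(x)=t(x)t(x)^*$ is at each $x$ the rank-one orthogonal projection onto $\CC\cdot t(x)\subseteq\Image p(x)$; hence $vv^*\leq p$, and therefore $e_{11}\precsim p$.

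No genuine obstacle is expected. The only mildly subtle step is extracting $pv=v$ from $vv^*\leq p$, which falls out of the short positivity identity above; the remainder is routine bookkeeping with columns and rank-one projections.
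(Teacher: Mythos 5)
Your argument is correct and follows essentially the same route as the paper: identify the partial isometry with a (normalized) column section of $\xi_p$ and read off the nowhere-vanishing condition from $v^*v=e_{11}$, and conversely build the partial isometry from a normalized nowhere-zero section. The only difference is that you spell out the positivity step $vv^*\leq p\Rightarrow pv=v$, which the paper leaves implicit in the assertion $s\in pM_n(C(X))e_{11}$.
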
	

\begin{proof}
	If $e_{11}=s^*s$ and $ss^*\leq p$, then 
	$s\in pM_n(C(X))e_{11}$. We can think of $s$ as an element of $P_p$, i.e., a continuous section of $\xi_p$. From $s^*s=e_{11}$
	we get that $s(x)\neq 0$ for all $x$. 
	
	Suppose conversely that there exists a nowhere-zero continuous section of $\xi_p$. We thus get  $s\in pM_n(C(X))e_{11}$
	that is nonzero for all $x$. Then $s^*(x)s(x)=\lambda(x)e_{11}$, where $\lambda(x)>0$ for all $x$. So 
	$\tilde s(x)=\frac{1}{\lambda(x)^{\frac12}}s(x)$ implements the Murray-von Neumann comparison between $e_{11}$ and $p$ i.e., $\tilde s^*\tilde s=e_{11}$ and $\tilde s\tilde s^*\leq p$. 	
\end{proof}	

We will use the following consequence of the above lemma: $e_{11}\precsim p^{\oplus n}$ for some $n\in \NN$
if and only if  there exist  $s_1,s_2,\ldots,s_n\in P_p$ such that  for all
$x\in X$ at least one $s_j(x)\neq 0$. This is because elements of  $P_p^{\oplus n}$ are  $n$-tuples of elements of $P_p$.
Notice that this means that if $e_{11}\not\precsim p^{\oplus n}$, then for any $n$-tuple
of elements in $P_p$ (sections of $\xi_p$) there exists $x\in X$ at which they all vanish.

Using standard  tools from algebraic topology we can construct projections whose associated vector bundles 
have no nowhere-zero sections. Before that, let us see how this is useful for our goal.

Given a projection $p\in M_\infty(C(X))$, let $\her(p)$ denote the algebra $pM_\infty(C(X))p$.

\begin{theorem}\label{thm:noCm}
	Let $m\in \NN$. Let $p\in M_N(C(X))$ be a projection such that $e_{11}\not\precsim p^{\oplus 8m}$. Define  $q=e_{11}\oplus p\in M_{N+1}(C(X))$
	and  $A=\her(q)\subseteq M_{N+1}(C(X))$. Then $A$ fails  $\Crm_m$.	
\end{theorem}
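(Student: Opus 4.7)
My plan is to argue by contradiction. Suppose $A$ satisfies $\Crm_m$, so that $q=\sum_{i=1}^{m}x_i[a_i,b_i]y_i$ for some $x_i,a_i,b_i,y_i\in A$. I would compress this identity by $e_{11}$, i.e., compare the $(1,1)$-entries of both sides as elements of $M_{N+1}(C(X))$. The goal is to rewrite the right-hand side as a sum of at most $8m$ scalar functions each ``factoring through $p$'', and then to extract from those at most $8m$ continuous sections of $\xi_p$ with no common zero; by the consequence of the preceding lemma this would force $e_{11}\precsim p^{\oplus 8m}$, contradicting the hypothesis.

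The algebraic core is the identity
\[
(zw)_{11}\;=\;z_{11}w_{11}\;+\;(zpw)_{11}\qquad \bigl(z\in A,\ w\in M_{N+1}(C(X))\bigr),
\]
which follows from the decomposition $z=zq=ze_{11}+zp$ together with the easy computation $(ze_{11}w)_{11}=z_{11}w_{11}$. Crucially, the second summand $(zpw)_{11}=(e_{1}^{T}z)\,p\,(we_{1})$ has the form $\rho\,p\,\kappa$ for a row $\rho$ and a column $\kappa$, and using $p^{2}=p$ it equals $(\rho p)(p\kappa)=\langle p\rho^{*},\,p\kappa\rangle$, a $C(X)$-valued inner product of two column sections of $\xi_p$ (that is, elements of $P_p$).

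I would then iterate this identity at each of the three internal junctions of the products $x_ia_ib_iy_i$ and $x_ib_ia_iy_i$, expanding each $(1,1)$-entry into a sum of at most four scalars: one \emph{fully contracted} product of the entries $x_{i,11},a_{i,11},b_{i,11},y_{i,11}$ (in some order) and up to three scalars of the form $\rho p\kappa$. Since $C(X)$ is commutative, the fully contracted scalars arising from $x_ia_ib_iy_i$ and $x_ib_ia_iy_i$ coincide and cancel in the difference, so each $(x_i[a_i,b_i]y_i)_{11}$ is expressible as a sum of at most $8$ scalars of the form $\rho p\kappa$.

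Summing over $i$ yields $1=q_{11}=\sum_{k=1}^{K}\rho_kp\kappa_k$ in $C(X)$ with $K\le 8m$. Setting $\tau_k=p\kappa_k\in P_p$, at each $x\in X$ one has $1=\sum_k\rho_k(x)p(x)\kappa_k(x)\ne 0$, so at least one $\tau_k(x)\ne 0$. The sections $\tau_1,\dots,\tau_K$ of $\xi_p$ therefore have no common zero, which by the consequence of the preceding lemma gives $e_{11}\precsim p^{\oplus K}\precsim p^{\oplus 8m}$, the desired contradiction. The main technical burden is the careful bookkeeping needed to put every non-contracted summand into the form $\rho p\kappa$; once that is in hand, the commutativity of $C(X)$ triggers the cancellation that drives the argument.
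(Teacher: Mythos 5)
Your argument is correct and rests on the same ingredients as the paper's proof: the block decomposition of $\her(e_{11}\oplus p)$, the commutativity of $C(X)$ killing the ``diagonal'' contribution of each commutator term, and the criterion that $e_{11}\precsim p^{\oplus K}$ exactly when $K$ sections of $\xi_p$ have no common zero. The only difference is organizational: the paper takes the $8m$ off-diagonal entries of the $x_i,a_i,b_i,y_i$ as the sections, uses the hypothesis to find a point where they all vanish, and evaluates there to get $1=0$, whereas you expand the $(1,1)$-entry globally and exhibit sections $p\kappa_k$ with no common zero (in fact only $6m$ of them, a slightly better count); the two bookkeepings are interchangeable.
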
	

\begin{proof}
	Let us represent elements of  $A$  as $2\times 2$ matrices
	\[
	\begin{pmatrix}
		a & b\\
		c & d
	\end{pmatrix},
	\]
	where $a\in C(X)$,  $b\in e_{1}^tM_{N+1}(C(X))p$, $c\in pM_{N+1}(C(X))e_{1}$, and $d\in pM_N(C(X))p$.
	Here $e_1$ is the column vector with 1 in the first entry and zeros elsewhere. 
	Observe that both $b^t$ and $c$ are elements of $P_p$.
	
	Now suppose that we have 
	\begin{equation}\label{Cmeqn}
		1=\sum_{i=1}^m x_i[a_i,b_i]y_i
	\end{equation}
	for some $x_i,y_i,a_i,b_i\in A$. Write
	\begin{align*}
		x_i=	\begin{pmatrix}
			x_{11}^{(i)} & x_{12}^{(i)}\\
			x_{21}^{(i)} & x_{22}^{(i)}
		\end{pmatrix},\,
		a_i=	\begin{pmatrix}
			a_{11}^{(i)} & a_{12}^{(i)}\\
			a_{21}^{(i)} & a_{22}^{(i)}
		\end{pmatrix},\\
		b_i=	\begin{pmatrix}
			b_{11}^{(i)} & b_{12}^{(i)}\\
			b_{21}^{(i)} & b_{22}^{(i)}
		\end{pmatrix},\,
		y_i=	\begin{pmatrix}
			y_{11}^{(i)} & y_{12}^{(i)}\\
			y_{21}^{(i)} & y_{22}^{(i)}
		\end{pmatrix}
	\end{align*}
	for $i=1,\ldots,m$. The off-diagonal terms of each of these matrices 
	are sections of $\xi_p$, i.e., an element of $P_p$. We have $8m$ such elements.
	By the assumption   $e_{11}\not\precsim p^{\oplus 8m}$, there exists $x\in X$ on which
	all the off-diagonal entries simultaneously vanish. Evaluating on such an $x$ both sides of \eqref{Cmeqn} 
	and comparing the top-left corners we get  $1=0$ (since $C(X)$ is commutative), a contradiction. Thus $A$ fails to have the property $\Crm_m$.
\end{proof}

Next we examine some examples of projections such that $e_{11}\not\precsim p$. To justify that $e_{11}\not\precsim p$ in these examples we rely on the fact that if $e_{11}\precsim p$, then $\xi_{p}$ has a nowhere-zero section, and this in turn implies that the Euler class of $\xi_p$ is zero (see \cite[Property 9.7]{milnor-stasheff}, \cite[Proposition 3.13 (e)]{VBKT}). Thus, if the Euler class of $\xi_p$ is nonzero, then  $e_{11}\not\precsim p$.

Let us denote by $e(\xi_p)\in H^*(X)$ the Euler class of the (complex) vector bundle $\xi_p$, where $H^*(X)$ is the 
cohomology ring of $X$ with integer coefficients. If $p$ has pointwise constant rank $k$, then $\xi_p$   has dimension $k$ over $\CC$, and $2k$ over $\RR$, so  $e(\xi_p)\in H^{2k}(X)$. We note that if $p$ has rank $k$, then
$e(\xi_p)$ agrees with the $k$-th Chern class $c_k(\xi_p)$.

\begin{example}[Bott projection]\label{bottprojection}
	The set $\mathcal P_2$	of rank one projections in $M_2(\CC)$ is homeomorphic to $S^2$, the 2-dimensional sphere, via the map
	$p\colon S^2\to \mathcal P_2\subseteq M_2(\CC)$:
	\[
	p((x,y,z))=\frac12
	\begin{pmatrix}
		1+x & y-iz\\
		y+iz & 1-x	
	\end{pmatrix}.
	\]  
	Regard $p$ as a projection  in $M_2(C(S^2))$. Then $e(\xi_p)$ is a generator of $H^2(S^2)$. In fact, we have a ring isomorphism
	$H^*(S^2)\cong \ZZ[\alpha]/(\alpha^2=0)$ such that  $e(\xi_p)\mapsto \alpha$ \cite[Theorem 14.4]{milnor-stasheff}. It follows that $\xi_p$ does not admit nowhere-zero  sections. That is,   $e_{11}\not\precsim p$. 
\end{example}	

\begin{example}\label{botttensor}
	Let $n\in \NN$ and set $X=S^2\times S^2\times\cdots \times S^2$ ($n$-fold cartesian product). Let $p\in M_2(C(S^2))$ be 
	the Bott projection (from the previous example).  Consider $q\in M_{2^n}(C(X))$
	defined as
	\[
	q(x_1,x_2,\ldots, x_n)=p(x_1)\otimes p(x_2)\otimes \cdots \otimes p(x_n)\in M_{2^n}(\CC)
	\]
	for $(x_1,x_2,\ldots,x_n)\in (S^2)^n$. Let us argue that $e_{11}\not\precsim q^{\oplus n}$. 
	From the  ring isomorphism   $H^*(S^2)\cong \ZZ[\alpha]/(\alpha^2=0)$, where $\alpha\in H^2(S^2)$, and the K\"{u}nneth
	formula, we have that 
	\[
	H^*(X)\cong \ZZ[\alpha_1,\alpha_2,\ldots,\alpha_n]\,/\,(\alpha_j^2:j=1,\ldots,n),
	\] 
	where $\alpha_1,\ldots,\alpha_n\in H^2(X)$ are induced by the projections $\pi_j\colon X\to S^2$ onto each sphere factor
	and the choice of a generator in $H^2(S^2)\cong \ZZ$.  Let us identify $H^*(X)$ with the ring on the right-hand side.
	Notice now that the vector bundle  $\xi_q$
	is the external tensor product of $\xi_p$ over each factor $S^2$ in $X$. Put differently, $\xi_q$ is the internal tensor
	product of the pullbacks $\xi_{p_i}$ of $\xi_p$ along each projection map $\pi_i\colon X\to S^2$. We thus get
	\[
	e(\xi_q)=\sum_{i=1}^n e(\xi_{p_i})=\sum_{i=1}^n \alpha_i\in H^2(X)
	\]
	\cite[Proposition 3.10]{VBKT}.
	Then, taking direct sum of $\xi_q$ with itself we get 
	\[
	e(\xi_{q^{\oplus n}})=(e(\xi_q))^n=(\alpha_1+\cdots +\alpha_n)^n=n!\alpha_1\alpha_2\cdots\alpha_n\in H^{2n}(X).
	\]
	(see \cite[Formula (4.7)]{milnor-stasheff}, \cite[Proposition 3.13 (b)]{VBKT}).
	Since $e(\xi_{q^{\oplus n}})\neq 0$, the vector bundle $\xi_{q^{\oplus n}}$ does not admit a nowhere-zero section.  
	That is, any $n$-tuple of elements  of $P_q$ must simultaneously vanish at some point $x\in (S^2)^n$.
\end{example}	

We will need one more kind of example.

\begin{example}\label{ex:directsum}
	Let  $l_i\leq n_i$ for $i=1,\ldots,k$ be in $\NN$.
	Let $X_i=(S^2)^{n_i}$ for $i=1,\ldots,k$ and $Y=X_1\times X_2\times\dots \times X_k$.
	Let $p\in M_2(C(S^2))$ be as in Example \ref{bottprojection} and  $q_i\in M_{2^{n_i}}(C(X_i))$
	as in  Example \ref{botttensor}. Define a projection 
	$r\in M_{\infty}(C(Y))$ as
	\[
	r(x_1,\ldots,x_k)=q_1^{\oplus l_1}(x_1)\oplus q_2^{\oplus l_2}(x_2)\oplus \cdots \oplus q_k^{\oplus l_k}(x_k) 
	\]
	for $(x_1,\ldots,x_k)\in X_1\times \cdots \times X_k$.  
	
	Suppose now that  $nl_i\leq n_i$ for all $i$, and let us argue that 
	$e_{11}\not\precsim r^{\oplus n}$. The Euler class calculations are very much as in the previous example:
	\[
	e(\xi_{r^{\oplus n}})=(e(\xi_r))^n=\Big(\sum_{j=1}^{n_1} \alpha_{1,j}\Big)^{nl_1}\Big(\sum_{j=1}^{n_1} \alpha_{2,j}\Big)^{nl_2}\cdots \Big(\sum_{j=1}^{n_k} \alpha_{k,j}\Big)^{nl_k}.
	\]
	The product on the right-hand side is calculated in the ring 
	\[
	\ZZ[\alpha_{i,j}:i=1,\ldots,k,\, j=1,\ldots,n_i]/(\alpha_{i,j}^2=0).
	\]
	The inequality $nl_i\leq n_i$ guarantees that this product is nonzero.
	Thus, $\xi_{r^{\oplus n}}$ does not admit nowhere-zero sections. 
	That is, any $n$-tuple of elements of $P_r$ must simultaneously vanish at some
	point $y\in Y$.
\end{example}

Combining Theorem \ref{thm:noCm} with the previous examples  we obtain C*-algebras of the form $\her(e_{11}\oplus r)$, with
$r\in M_\infty(C(X))$, that fail  $\Crm_m$. 
Next we construct the embeddings between such C*-algebras  that we will use to build a simple inductive limit.

Let $p\in M_n(C(X))$ be a projection and $x_0\in X$. We get a *-homomorphism
$\mathrm{ev}_{x_0}$ from $\her(p)$ to $\her(p(x_0))$ by evaluating at $x_0$:
\[
\her(p)\ni f\mapsto f(x_0)\in \her(p(x_0))\subseteq M_{n}(\CC). 
\]
Let $l=\mathrm{rank}(p(x_0))$. Then $\her(p(x_0))$ is isomorphic to $M_l(\CC)$. (This can be seen, for example,  by diagonalizing $p(x_0)$.) Choosing any isomorphism between $\her(p(x_0))$ and $M_l(\CC)$, let us regard $\mathrm{ev}_{x_0}$ as a *-homomorphism from $\her(p)$ to $M_l(\CC)$.

Let $q\in M_\infty(C(Y))$ and $l\in \NN$. Then we can embed $M_l(\CC)$ in $\her(q^{\oplus l})$ via 
\[
(\lambda_{ij})_{ij}\mapsto 
\lambda\otimes q:=(\lambda_{ij}q)_{ij}.
\]

We can combine the two previous constructions as follows. Let $p\in M_\infty(C(X))$ and  $q\in M_\infty(C(Y))$ be projections. Let $x_0\in X$. Set $l=\mathrm{rank}(p(x_0))$. Then we get a *-homomorphism $\her(p)\to \her(q^{\oplus l})$ given by 
\[
\her(p)\ni f\mapsto \mathrm{ev}_{x_0}(f)\otimes q\in\her(q^{\oplus l}).
\]
This homomorphism is not in general an embedding (unless $X$ is a singleton). Define now a projection $r\in M_\infty(C(X\times Y))$
as $r=(p\circ \pi_X)\oplus (q\circ\pi_Y)^{\oplus l}$, i.e., 
\[
r(x,y)=p(x)\oplus q(y)^{\oplus l}\hbox{ for }(x,y)\in X\times Y.
\]
Let
 $\phi\colon \her(p)\to \her(r)$ be the *-homomorphism defined as
\[
\phi(f) = 
\begin{pmatrix}
	f\circ\pi_X & 0\\
	0 & \mathrm{ev}_{x_0}(f)\otimes (q\circ\pi_Y)	 
\end{pmatrix}\in \her(r).
\]
Then $\phi$ is a unital embedding from $\her(p)$ to $\her(r)$.

\begin{lemma}\label{lem:inductive}
	Let $(A_n)_{n=1}^\infty$ be an increasing sequence of unital C*-subalgebras of $A$ such that $A=\overline{\bigcup_n A_n}$.
	If $A_n$ fails $\Crm_m$ for all $n$, then $A$ fails $\Crm_m$.	
\end{lemma}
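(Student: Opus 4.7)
The natural approach is the contrapositive: I will show that if $A$ has property $\Crm_m$, then some $A_n$ also has $\Crm_m$. So assume there exist $x_i,a_i,b_i,y_i\in A$ with $1=\sum_{i=1}^m x_i[a_i,b_i]y_i$. Using the density of $\bigcup_n A_n$ in $A$, I will pick, for each $i$, approximations $x_i',a_i',b_i',y_i'\in A_n$ (for some common $n$ large enough) that are uniformly close to $x_i,a_i,b_i,y_i$.

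The key observation is that the map sending $(x_i,a_i,b_i,y_i)_{i=1}^m$ to $\sum_{i=1}^m x_i[a_i,b_i]y_i$ is (jointly) continuous on bounded sets. Hence, by choosing the approximations close enough, the element $u:=\sum_{i=1}^m x_i'[a_i',b_i']y_i'\in A_n$ can be made arbitrarily close to $1$, in particular within distance $<1$ of $1$ in norm. Then $u$ is invertible in $A$ (since $\|1-u\|<1$), and because $A_n$ is a unital C*-subalgebra of $A$ (sharing the same unit), the spectrum of $u$ computed in $A_n$ coincides with the spectrum computed in $A$, so $u^{-1}\in A_n$. Multiplying the identity $u=\sum_{i=1}^m x_i'[a_i',b_i']y_i'$ on the left by $u^{-1}$ yields
\[
1=\sum_{i=1}^m (u^{-1}x_i')[a_i',b_i']y_i',
\]
which exhibits $\Crm_m$ in $A_n$, contradicting the hypothesis.

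The only delicate point is ensuring the approximation actually makes $u$ close to $1$ in norm; this requires controlling the products of four terms, for which a standard estimate of the form $\|xy-x'y'\|\leq \|x-x'\|\,\|y\|+\|x'\|\,\|y-y'\|$, iterated, suffices. Nothing beyond the invariance of spectra under passage to a unital C*-subalgebra and the continuity of the algebraic operations is needed, so I anticipate no genuine obstacle.
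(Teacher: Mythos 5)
Your proposal is correct and follows essentially the same argument as the paper: approximate the $x_i,a_i,b_i,y_i$ by elements of some $A_n$ so that $u=\sum_i x_i'[a_i',b_i']y_i'$ satisfies $\|1-u\|<1$, then invert $u$ inside $A_n$ and absorb $u^{-1}$ into the left factors. Your additional remarks on spectral permanence in unital C*-subalgebras just make explicit a step the paper leaves implicit.
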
	
\begin{proof}
	Suppose for the sake of contradiction that 	
	\[
	1=\sum_{i=1}^m x_i[a_i,b_i]y_i
	\]
	for some $x_i,y_i,a_i,b_i\in A$. Using that $\bigcup_n A_n$ is dense
	in $A$ and that the sequence $(A_n)_n$ is increasing, we find $n$ and  $x_i',y_i',a_i',b_i'\in A_n$ such that
	\[
	\|1-\sum_{i=1}^m x_i'[a_i',b_i']y_i'\|<1.
	\]
	It follows that $z=\sum_{i=1}^m x_i'[a_i',b_i']y_i'$ is invertible in $A_n$, and so
	\[
	1=\sum_{i=1}^m z^{-1}x_i'[a_i',b_i']y_i'.
	\]
	This contradicts that $A_n$ fails to have $\Crm_m$.
\end{proof}

\begin{theorem}\label{thm:simplenoCm}
	Let $m\in \NN$. There exists a simple unital C*-algebra that fails to have property $\Crm_m$.	
\end{theorem}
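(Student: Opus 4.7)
My plan is to construct the desired C*-algebra as the inductive limit of a tower $A_1 \hookrightarrow A_2 \hookrightarrow \cdots$ of unital homogeneous C*-algebras, with connecting *-homomorphisms $\phi_k\colon A_k \to A_{k+1}$, each $A_k$ of the form $\her(q_k)$ with $q_k = e_{11} \oplus r_k$ a projection over a product of $2$-spheres $Y_k$ satisfying $e_{11} \not\precsim r_k^{\oplus 8m}$. Theorem \ref{thm:noCm} will then ensure that every $A_k$ fails $\Crm_m$, and Lemma \ref{lem:inductive} will propagate this to the limit $A = \varinjlim A_k$; the remaining task is to arrange that $A$ is simple. The connecting maps $\phi_k$ will be obtained by iterating the evaluation-plus-pullback embedding recipe displayed immediately before Lemma \ref{lem:inductive}, using multiple evaluation points per stage.

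For the base case I would take $Y_1 = (S^2)^{N_1}$ with $N_1 \geq 8m$, let $r_1$ be the iterated Bott projection of Example \ref{botttensor}, and set $q_1 = e_{11} \oplus r_1$. The Euler-class calculation there gives $e_{11} \not\precsim r_1^{\oplus 8m}$. For the inductive step, given $A_k = \her(q_k)$ on $Y_k$, I would choose a finite list of points $x_1^{(k)}, \dots, x_{s_k}^{(k)} \in Y_k$, set $l_j^{(k)} = \rank(q_k(x_j^{(k)}))$ and $L_k = \sum_j l_j^{(k)}$, pick a new product of spheres $Z_{k+1} = (S^2)^{N_{k+1}}$ together with a Bott-tensor projection $q'_{k+1} = e_{11} \oplus r'_{k+1}$ on $Z_{k+1}$, put $Y_{k+1} = Y_k \times Z_{k+1}$, and define
\[
q_{k+1} = (q_k \circ \pi_{Y_k}) \oplus (q'_{k+1} \circ \pi_{Z_{k+1}})^{\oplus L_k}, \quad A_{k+1} = \her(q_{k+1}),
\]
with $\phi_k(f) = \diag(f \circ \pi_{Y_k}, \mathrm{ev}_{x_1^{(k)}}(f) \otimes q'_{k+1}, \dots, \mathrm{ev}_{x_{s_k}^{(k)}}(f) \otimes q'_{k+1})$. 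This is the natural multi-point-evaluation extension of the embedding defined above Lemma \ref{lem:inductive}, and it produces a tower of unital embeddings between homogeneous C*-algebras over products of $2$-spheres.

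Two conditions must be maintained along the tower: (a) $q_{k+1}$ admits a decomposition $e_{11} \oplus \tilde r_{k+1}$ with $e_{11} \not\precsim \tilde r_{k+1}^{\oplus 8m}$, so that Theorem \ref{thm:noCm} still applies to $A_{k+1}$; and (b) the inductive limit $A$ is simple. For (a), I would isolate one $e_{11}$ summand (coming from $q_k$) and fold the remaining pieces, including the $L_k$ further $e_{11}$'s hidden inside the copies of $q'_{k+1}$, into $\tilde r_{k+1}$. The projection $\tilde r_{k+1}$ is then a direct sum of pullbacks of Bott-tensor projections over independent sphere factors of $Y_{k+1}$, and the Euler-class argument of Example \ref{ex:directsum} applies: a concrete numerical inequality, essentially requiring $N_{k+1}$ to grow like $8m \cdot L_k$, ensures $e_{11} \not\precsim \tilde r_{k+1}^{\oplus 8m}$. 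For (b), I would follow the standard Villadsen strategy as in \cite{robert2015}: choose the evaluation points so that, for every $k$, the set of points of $Y_k$ hit by some trace-back through later embeddings is dense in $Y_k$. A routine argument then shows that for each nonzero $f \in A_k$, the section $\phi_{k,j}(f)$ is nowhere-vanishing for all sufficiently large $j$, so the ideal generated by $f$ in $A$ equals $A$.

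The main obstacle I anticipate is the tension between (a) and (b): ensuring simplicity pushes $s_k$, and hence $L_k$, to grow, while the Euler-class constraint in (a) forces $N_{k+1}$ to grow at least linearly in $L_k$. Managing both simultaneously requires a careful schedule for $N_{k+1}$, $s_k$, and the distribution of evaluation points—essentially the Villadsen-style bookkeeping of \cite{robert2015}, now carried out with the explicit Euler-class bounds of Example \ref{ex:directsum} in place of the abstract topological dimension estimates used there.
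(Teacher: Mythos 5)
Your overall architecture coincides with the paper's: a tower of homogeneous algebras $\her(q_k)$ over growing products of $2$-spheres, connecting maps of evaluation-plus-pullback type, Theorem \ref{thm:noCm} combined with the Euler-class computation of Example \ref{ex:directsum} at each finite stage, Lemma \ref{lem:inductive} to pass to the limit, and a density condition on the evaluation points to force simplicity via the D\u{a}d\u{a}rlat--Nagy--N\'emethi--Pasnicu criterion. However, there is one concrete flaw in your inductive step: you take the new target projection to be $q'_{k+1}=e_{11}\oplus r'_{k+1}$ and then propose to fold the resulting $L_k$ copies of $e_{11}$ into $\tilde r_{k+1}$. Once $\tilde r_{k+1}$ contains even a single constant rank-one summand, the bundle $\xi_{\tilde r_{k+1}}$ has an obvious nowhere-zero (constant) section, so $e_{11}\precsim \tilde r_{k+1}\precsim \tilde r_{k+1}^{\oplus 8m}$ and its Euler class vanishes; no choice of $N_{k+1}$ can repair this, and Theorem \ref{thm:noCm} becomes inapplicable to $A_{k+1}$. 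Indeed, since $q_{k+1}$ would then dominate at least two orthogonal trivial rank-one projections, \emph{no} decomposition $q_{k+1}=e_{11}\oplus \tilde r$ can satisfy the hypothesis of that theorem.

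The fix is exactly what the paper does: the tensor factor in the evaluation summand must be the rank-one Bott-tensor projection $r'_{k+1}$ itself (as in Example \ref{botttensor}), not $e_{11}\oplus r'_{k+1}$, so that the connecting map is $f\mapsto \diag(f\circ\pi_{Y_k},\, \mathrm{ev}(f)\otimes (r'_{k+1}\circ\pi_{Z_{k+1}}))$ and the single $e_{11}$ summand introduced at stage one is the only trivial summand ever present in $q_k$. With that correction your bookkeeping works: the multiplicity of the new Bott summand is $L_k=\mathrm{rank}(q_k(x))$, and requiring $8mL_k\leq N_{k+1}$ gives condition (a) by Example \ref{ex:directsum}. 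Your use of several evaluation points per stage, rather than the paper's single point $z_n$ whose coordinates are chosen dense across stages, is an inessential variation; both yield simplicity by the same criterion.
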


\begin{proof}
	Let $(k_n)_{n=1}^\infty$ be an increasing sequence of natural numbers. 
	We will use this sequence to  construct  C*-algebras $(A_n)_n$ and injective *-homomorphisms
	$\phi_n\colon A_n\to A_{n+1}$ for all $n$.  Then,  by letting  $(k_n)_{n=1}^\infty$ grow sufficiently fast, we will argue that 
	$A_n$ fails to satisfy $\Crm_m$ for all $n$, whence $A$ also fails $\Crm_m$ by Lemma \ref{lem:inductive}.
	
	For each $n\in \NN$, let $X_n=(S^2)^{k_n}$.
	Let $q_n\in M_{2^{k_n}}(C(X_n))$ be the rank one projection on $X_n$ obtained in Example \ref{botttensor}. 
	
	Let $Y_n=\prod_{i=1}^n X_i$. Define a  projection $r_n\in M_\infty(C(Y_n))$ by
	\[
	r_n(y)= e_{11}\oplus q_1(x_1)^{\oplus l_1}\oplus q_2(x_2)^{\oplus l_2}\oplus \cdots \oplus q_n(x_n)^{\oplus l_n},
	\]
	where $y=(x_1,x_2,\dots,x_n)\in Y_n$.   The numbers $l_n$ are defined recursively  such that $l_1=1$ and $l_{n+1}=\mathrm{rank}(r_{n})$ for $n\geq 1$. In fact, it is easy to calculate that $l_n=2^{n-1}$ for $n\geq 1$.
	
	Define
	\[
	A_n = \her(r_n)=r_nM_\infty(C(Y_n))r_n.
	\]
	
	For each $n$, choose  $z_n\in Y_n$. Observe, from the definition of $r_n$, that
	\[
	r_{n+1} = (r_{n}\circ\pi_{Y_n})\oplus (q_{n+1}^{\oplus l_{n+1}})\circ \pi_{X_{n+1}}.
	\]
	Keeping in mind the construction of an injective *-homomorphism discussed before the theorem, let $\phi_n\colon \her(r_n)\to \her(r_{n+1})$ be the *-homomorphism defined as follows:
	\[
	\phi_n(f)=\begin{pmatrix}f\circ\pi_{Y_n} & 0\\ 0 & \mathrm{ev}_{z_n}(f)\otimes (q_{n+1}\circ \pi_{X_{n+1}})\end{pmatrix}.
	\]
	
	Let $A:=\varinjlim (A_n,\phi_n)$ be the inductive limit in the category of C*-algebras.
	In order for $A$ to fail  $\Crm_m$ it suffices that  $A_n$ fails  $\Crm_m$ for all $n$ (Lemma \ref{lem:inductive}).
	Notice that
	\[
	r_n=e_{11}\oplus r_n',
	\]
	where
	\[
	r_n'=(q_1\circ\pi_{X_1})^{\oplus l_1}\oplus (q_2\circ\pi_{X_2})^{\oplus l_2}\oplus \cdots \oplus (q_n\circ \pi_{X_n})^{\oplus l_n}.
	\]
	By Example \ref{ex:directsum}, if $8ml_n\leq k_n$ then $A_n$ does not have $\Crm_m$. Thus, it suffices to choose a sequence $(k_n)_n$
	such that  $8m2^{n-1}\leq k_n$ for all $n$.

	It is known that choosing the points $z_n\in Y_n$ suitably, one can arrange for  the inductive limit $A$ to be simple. More concretely, choose 
	\[
	z_n=(z_{n1},z_{n2},\ldots,z_{nn})\in X_1\times X_2\times \cdots \times X_n
	\] 
such that, for each fixed $j$, the sequence 
	\[
	(z_{n1},z_{n2},\ldots,z_{nj})\in X_1\times X_2\times \cdots \times X_j\hbox{ for }n\geq j
	\]
	is dense in $X_1\times X_2\cdots \times X_j$. Then, using the simplicity criterion from \cite[Proposition 2.1]{dnnp}, we deduce that	$A$ is simple (see \cite[Section 3, p. 1093]{villadsen}).
\end{proof}	

Given a unital C*-algebra $A$ and a polynomial $f$ in noncommuting variables with coefficients in $\CC$, let us denote by $f(A)$ the image of $f$ when evaluated in $A$, i.e., the set
$\{f(\bar a):\bar a\in A^n\}$.

\begin{corollary}\label{cor:fimage}
	Let $f$ be a polynomial in noncommuting variables that is an identity for $\CC$. Then there exists a simple unital
	noncommutative C*-algebra $A$ such that $1\not \in f(A)$.  	
\end{corollary}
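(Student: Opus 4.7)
The plan is to reduce the statement directly to Theorem \ref{thm:simplenoCm} by first showing that any noncommutative polynomial identity for $\CC$ lies in the two-sided commutator ideal of the free algebra, with some \emph{finite} number of commutator terms.

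First, I would observe that since $\CC$ is commutative, the value $f(c_1,\ldots,c_n)$ for scalars $c_i\in\CC$ depends only on the image $\bar f$ of $f$ under the abelianization homomorphism
\[
\pi\colon \CC\langle X_1,\ldots,X_n\rangle \longrightarrow \CC[X_1,\ldots,X_n].
\]
Because $\CC$ is an infinite field and $\bar f$ vanishes on $\CC^n$, it follows that $\bar f=0$ in $\CC[X_1,\ldots,X_n]$. Since $\ker\pi$ is precisely the two-sided ideal generated by the commutators $[X_i,X_j]$, we conclude $f\in\ker\pi$. Hence there exists some $M\in\NN$ and noncommutative polynomials $u_j,p_j,q_j,v_j\in \CC\langle X_1,\ldots,X_n\rangle$ for $j=1,\ldots,M$ with
\[
f \;=\; \sum_{j=1}^M u_j\,[p_j,q_j]\,v_j
\]
as elements of the free algebra.

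Next, I would apply Theorem \ref{thm:simplenoCm} with this particular $M$ to obtain a simple unital C*-algebra $A$ failing $\Crm_M$. The algebra $A$ produced in that theorem is infinite-dimensional, hence not isomorphic to $\CC$; as every simple unital commutative C*-algebra is $\CC$, it follows that $A$ is noncommutative. Now suppose toward a contradiction that $1\in f(A)$, so $1=f(a_1,\ldots,a_n)$ for some $a_i\in A$. Applying the evaluation homomorphism $X_i\mapsto a_i$ to both sides of the above decomposition gives
\[
1 \;=\; \sum_{j=1}^M u_j(\bar a)\,[\,p_j(\bar a),\,q_j(\bar a)\,]\,v_j(\bar a),
\]
where $\bar a=(a_1,\ldots,a_n)$. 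This exhibits $1$ as a sum of $M$ terms of the form $x[a,b]y$ with $x,a,b,y\in A$, contradicting the failure of $\Crm_M$ for $A$.

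There is no substantive obstacle beyond what has already been established: the only input besides Theorem \ref{thm:simplenoCm} is the standard identification of $\ker\pi$ with the commutator ideal in the free algebra, together with the finiteness of the expression for $f$ as a sum of commutator terms. Crucially, the bound $M$ depends only on $f$ and not on the eventual algebra $A$, which is exactly what permits the reduction to the uniform failure of $\Crm_M$ provided by Theorem \ref{thm:simplenoCm}.
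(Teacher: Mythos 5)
Your proposal is correct and follows essentially the same route as the paper: both reduce the statement to Theorem \ref{thm:simplenoCm} by writing $f$ as a finite sum $\sum_k g_k[x_{i_k},x_{j_k}]h_k$ in the free algebra (so that $1\in f(A)$ would force property $\Crm_m$ for the corresponding $m$). Your version merely spells out in more detail why $f$ lies in the commutator ideal (via the abelianization map and the fact that $\CC$ is infinite) and why the algebra from Theorem \ref{thm:simplenoCm} is noncommutative, both of which the paper leaves implicit.
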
	
\begin{proof}
	Let $x_1,\ldots,x_n$ be the variables of $f$. Then $f$ belongs to the two-sided ideal generated by the commutators 
	$[x_i,x_j]$  for  $i,j=1,\ldots,n$ with $i\neq j$. That is,
	\[
	f=\sum_{k=1}^m g_k[x_{i_k},x_{j_k}]h_k
	\]
	for some polynomials $g_k,h_k$ and some $m\in \NN$. 
	It follows that if  $1\in f(A)$ then $A$ has property $\Crm_{m}$.  By Theorem \ref{thm:simplenoCm},
	there exist simple unital noncommutative C*-algebras  without $\Crm_m$. 
\end{proof}	

By the obvious induction argument, Theorem \ref{t1} implies that
every element in a finite-dimensional simple algebra $A$
that is not a field is  a product of $n$ commutators for any $n\ge 2$. Corollary 3.8 in particular shows that this is not always true in infinite dimensions. More precisely, the following holds.

\begin{corollary}
    There exists a   simple unital algebra that is not a field in which $1$ is not a product of commutators.
\end{corollary}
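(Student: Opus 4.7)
The plan is to extract this corollary directly from Corollary \ref{cor:fimage} by applying it to the polynomial $f(x_1,x_2)=[x_1,x_2]$. Since $\CC$ is commutative, $f$ vanishes identically on $\CC^2$, so $f$ is an identity for $\CC$, and the corollary produces a simple, unital, noncommutative C*-algebra $A$ with $1\notin f(A)$. Because $A$ is noncommutative, it is in particular not a field, which takes care of one half of what we want.

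The remaining step is to upgrade ``$1$ is not a single commutator in $A$'' to ``$1$ is not a product of any positive number of commutators in $A$''. I would argue by contradiction: if $1=[a_1,b_1][a_2,b_2]\cdots [a_n,b_n]$ for some $n\ge 1$ and $a_i,b_i\in A$, then setting $x=1$ and $y=[a_2,b_2]\cdots [a_n,b_n]$ (with the convention $y=1$ when $n=1$) gives $1=x[a_1,b_1]y$, which is precisely property $\Crm_1$. But $\Crm_1$ is the same as $1\in f(A)$ for $f=[x_1,x_2]$, contradicting the choice of $A$.

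I do not expect any genuine obstacle here. All the hard work is already packaged into Theorem \ref{thm:simplenoCm} and Corollary \ref{cor:fimage}; the only content specific to the present corollary is the one-line observation that a product of arbitrarily many commutators collapses, by absorbing all but the first factor into the right multiplier $y$, to a single term of $\Crm_1$-type. One could equally well bypass Corollary \ref{cor:fimage} and invoke Theorem \ref{thm:simplenoCm} with $m=1$ directly, noting that the algebra produced there is noncommutative because the initial stage $A_1=\her(r_1)$ has fibre rank at least two and embeds unitally into the inductive limit.
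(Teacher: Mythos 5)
Your core reduction is exactly the right one, and it is what the paper (implicitly) intends: if $1=[a_1,b_1]\cdots[a_n,b_n]$, then $1=1\cdot[a_1,b_1]\cdot y$ with $y=[a_2,b_2]\cdots[a_n,b_n]$, so the algebra would satisfy $\Crm_1$; hence any simple, unital, noncommutative algebra failing $\Crm_1$ works, and Theorem \ref{thm:simplenoCm} with $m=1$ supplies one (noncommutative because already $A_1=\her(r_1)$ has fibres isomorphic to $M_2(\CC)$, and a simple commutative unital C*-algebra would just be $\CC$).

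One step in your primary route is wrong as stated, though: property $\Crm_1$ is \emph{not} the same as $1\in f(A)$ for $f=[x_1,x_2]$. The latter says $1$ is a commutator; the former says $1=x[a,b]y$, which is strictly weaker. So the statement of Corollary \ref{cor:fimage} applied to $f=[x_1,x_2]$ only rules out $1$ being a single commutator and does not by itself rule out $\Crm_1$, which is what your contradiction needs. (Applying Corollary \ref{cor:fimage} to the length-$n$ product $[x_1,x_2]\cdots[x_{2n-1},x_{2n}]$ also does not suffice, since it yields a different algebra for each $n$.) Your own fallback --- invoking Theorem \ref{thm:simplenoCm} directly with $m=1$ --- closes this gap cleanly and should be taken as the actual proof; alternatively one can note that the algebra produced in the proof of Corollary \ref{cor:fimage} for $f=[x_1,x_2]$ is constructed precisely so as to fail $\Crm_1$, but that requires citing the proof rather than the statement.
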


\end{document}